\newtheorem{theorem}{Theorem}[section]
\newtheorem{proposition}[theorem]{Proposition}
\newtheorem{corollary}[theorem]{Corollary}
\newtheorem{example}[theorem]{Example}
\newtheorem{remark}{Remark}
\newcommand\Z{\mathbb{Z}}
\DeclareMathOperator{\dinv}{dinv}
\DeclareMathOperator{\skips}{skip}
\DeclareMathOperator{\area}{area}
\DeclareMathOperator{\bounce}{bounce}
\DeclareMathOperator{\arm}{arm}
\DeclareMathOperator{\leg}{leg}
\DeclareMathOperator{\rk}{rk}
\DeclareMathOperator{\PF}{PF}
\DeclareMathOperator{\ides}{ides}
\newcommand\catalannumber[6]{
  \begin{tabular}{l}\begin{tikzpicture}[thick,scale=#6, every node/.style={scale=#6}]
  \fill[fill=lime]  (#1) rectangle +(#3,#2);
  \fill[cyan!25]
  (#1)
  \foreach \dir in {#4}{
    \ifnum\dir=0
    -- ++(1,0)
    \else
    -- ++(0,1)
    \fi
  } |- (#1);
  \ifnum #5=1
    \foreach \ivar in {1,...,#3} {
       \foreach \jvar in {1,...,#2} {
         \draw (-.5+\ivar,-.5+\jvar) node {\pgfmathparse{int(-#2*\ivar+(\jvar-1)*#3)}\pgfmathresult};
        }
   }
  \fi
  \draw[help lines] (#1) grid +(#3,#2);
  \draw[dashed] (#1) -- +(#3,#2);
  \coordinate (prev) at (#1);
  \foreach \dir in {#4}{
    \ifnum\dir=0
    \coordinate (dep) at (1,0);
    \else
    \coordinate (dep) at (0,1);
    \fi
    \draw[line width=2pt,-stealth] (prev) -- ++(dep) coordinate (prev);
  };
  \end{tikzpicture}\end{tabular}
}
\newcommand\rankword[2]{
   \newcount\n
   \newcount\m
    \n=\intcalcSub{\intcalcMul{2}{#1}}{3}

   [
   \foreach \ivar in {1,...,\n} {%
     \global\m=0
     \ifnum \intcalcMod{\ivar}{3}=0\else
        \ifnum \intcalcMod{\ivar}{3}=\intcalcMod{#1}{3} \ifnum \ivar<#1
            \foreach\jvar in {#2}{%
               \ifnum \ivar=\jvar
                  \global\m=1
               \fi
             }
             \ifnum \m=1
                \framebox[1.5\width]{\ivar$_2$}
             \else  \ivar$_2$ \fi           
             \ifnum \ivar=\n \else,\fi
         \fi \else 
            \foreach\jvar in {#2}{%
               \ifnum \ivar=\jvar
                  \global\m=1
               \fi
             }
             \ifnum \m=1
                \framebox[1.5\width]{\ivar$_1$}
             \else \ivar$_1$ \fi
             \ifnum \ivar=\n \else,\fi
         \fi
     \fi
   }
   ]
}
\author{Ryan Kaliszewski\thanks{Email: \email{rlk72@drexel.edu}}
  \and Huilan Li\thanks{Email: \email{huilan@math.drexel.edu}}
}
\title[The $(m,n)$-rational $q, t$-Catalan polynomials for $m=3$ and their $q,t$-symmetry]{The $(m,n)$-rational $q, t$-Catalan polynomials for $m=3$ and their $q,t$-symmetry}
\address{Department of Mathematics, Drexel University, 3141 Chestnut Street, Philadelphia, PA 19104, USA}
\begin{document}
\maketitle
\begin{abstract}
We introduce a new statistic, $\skips$, on rational $(3,n)$-Dyck paths  and define a marked rank word for each path when $n$ is not a multiple of 3.  If a triple of valid statistics ($\area,\skips,\dinv$) are given, we have an algorithm to construct the marked rank word corresponding to the triple.  By considering all valid triples we give an explicit formula for the $(m,n)$-rational $q,t$-Catalan polynomials when $m=3$.  Then there is a natural bijection on the triples of statistics $(\area,\skips,\dinv)$ which exchanges the statistics $\area$ and $\dinv$ while fixing the $\skips$. Thus we prove the $q,t$-symmetry of $(m,n)$-rational $q, t$-Catalan polynomials for $m=3$.
\end{abstract}

\keywords{Dyck path, Catalan number, rank word}

\section{Introduction}
\label{sec:in}

In the early 1990's Garsia and Haiman introduced an important sum of rational
functions in
$q,t$, the classic $q,t$-Catanlan polynomial $C_n(q,t)$,
which has since been shown to have interpretations in terms of algebraic
geometry and representation theory.
Now it is well-known that the classic $q,t$-Catalan polynomial is
\begin{align} C_n(q,t) & =\sum_{\pi}q^{\dinv(\pi)}t^{\area(\pi)} \\  \label{pt2}  &=\sum_{\pi}q^{\area(\pi)}t^{\bounce(\pi)},\end{align}
where the sums are over all Dyck paths $\pi$ from $(0,0)$ to $(n,n)$ and \eqref{pt2} is due to \cite{Haglund03}.
Though the symmetry problem of the classic $q,t$-Catalan polynomials has been solved through the use of Macdonald polynomials, no bijective proof has been found, where the bijection means an involution on the set of Dyck paths that exchanges  $(\dinv,\area)$ or $(\area,\bounce)$.  For an overview of the classical $q,t$-Catalan polynomials and Dyck paths, see, \cite{Garsia96}, \cite{Garsia02}, \cite{Haglund05}, and \cite{Haglund08}.

Let $m$ and $n$ be coprime. An $(m,n)$-Dyck path is a path in $m\times n$ lattice which proceeds by north and east steps from $(0,0)$ to $(m,n)$ and which always remains weakly above the main diagonal $y=\frac{m}{n}x$. The number of full cells between an $(m,n)$-Dyck path $\Pi$ and the main diagonal is denoted $\area(\Pi)$. The collection of cells above a Dyck path $\Pi$ forms an English Ferrers diagram $\lambda(\Pi).$ For any cell $x\in\lambda(\Pi)$, let $\leg(x)$ and $\arm(x)$ denote number of cells in $\lambda(\Pi)$ which are strictly south or strictly east of $x$, respectively. Then define $\dinv(\Pi)$ is the number of cells above the path $\Pi$ satisfying
\begin{equation} \label{dinveq}\frac{\arm(x)}{\leg(x)+1} < \frac{m}{n} <\frac{\arm(x)+1}{\leg(x)}. \end{equation}
The $(m,n)$-rational $q, t$-Catalan polynomial is 
$$C_{m,n}(q,t)=\sum_{\Pi}q^{\dinv(\Pi)}t^{\area(\Pi)},$$
where the sum is over all $(m,n)$-Dyck paths.  Note that for every $(m,n)$-Dyck path there is a complementary $(n,m)$-Dyck path with identical statistics.  For example, 
\begin{center}\catalannumber{0,0}{5}{3}{1,1,1,1,0,0,1,0}{1}{0.8} corresponds to \catalannumber{0,0}{3}{5}{1,0,1,1,0,0,0,0}{1}{0.8}.\end{center}
Thus the $(m,n)$-rational $q,t$-Catalan polynomial is symmetric in the parameters $m$ and $n$. 

The classic $q,t$-Catalan polynomials correspond to the cases $m=n+1$. It is an open conjecture that the $(m,n)$-rational $q, t$-Catalan polynomials are symmetric in $q$ and $t$.  \cite{Gorsky14} have proven the $q,t$-symmetry for the case when $m\leq 3$ without giving an explicit formula for the polynomials.  Also, independently, \cite{LLL14} have proven the $q,t$-symmetry when $m\leq 4$.

Using the definition of \cite{Hikita12}, parking functions and their statistics can be extended to the $(m, n)$-rational case. \cite{Hikita12} defined polynomials
\begin{equation} \label{hikitapoly} H_{m,n}(X;q,t)=\sum_{\PF}t^{\area(\PF)}q^{\dinv(\PF)}F_{\ides(\PF)}(X), \end{equation}
where the sum is over all parking functions $\PF$ over all $(m,n)$-Dyck paths and the $F$ are the (Gessel) fundamental quasisymmetric functions indexed by the inverse descent composition of the reading word of the parking functions $\PF$. When $m=2$ or $n=2$, the structure  of Hikita's polynomials  have been completely described by \cite{Leven14}.  Since, for each $(m,n)$-rational Dyck path there is a parking function on that path with the same statistics as the underlying Dyck path, the $(m,n)$-rational $q,t$-Catalan polynomial appears as the coefficient of $F_{(1^n)}$ in the expansion of the Hikita's polynomial $H_{m,n}$.  Thus, describing the structure of the $(m,n)$-rational $q,t$-Catalan polynomials is the first step to uncovering the structure of the Hikita's polynomials.

In this paper, we prove that
\begin{align} C_{3,n}(q,t) & =\sum_{s=0}^{\lfloor n/3 \rfloor}\sum_{\substack{a,d\geq s\\a+s+d+1=n}}q^dt^a \\ &=\sum_{s=0}^{\lfloor n/3 \rfloor}\hspace{1mm}\sum_{a=s}^{n-2s-1}q^{n-a-s-1}t^a \end{align}
and the $q,t$-symmetry, i.e.
\begin{equation} C_{3,n}(q,t)=C_{3,n}(t,q).\end{equation}
For each $(3,n)$-Dyck path, we define a marked rank word corresponding to the path and a new statistic $\skips$ on the path or the marked rank word. If a triple of valid statistics $\area,\skips$ and $\dinv$ are given, we have an algorithm to construct a marked rank word corresponding to the triple. Then there is a natural bijection on the triples of statistics $(\area,\skips,\dinv)$ which switched the statistics $\area$ and $\dinv$. Thus the symmetry in $q,t$ is proved.  \cite{Gorsky14} use a construction similar to our rank words, but the $\skips$ statistic in this paper only arises from the markings we place upon the words.

In Section 2, we introduce the notations, background and define the new statistic $\skips$ on $(3,n)$-Dyck paths. In Section 3, we show that every $(3,n)$-Dyck path is uniquely determined by the triple of statistics $\area,\skips$ and $\dinv$.  We use this triple to construct an explicit formula for $C_{3,n}(q,t)$.  The formula and the uniqueness of the triple allows us to find a unique bijection on $(3,n)$-Dyck paths which switches the $\area$ and $\dinv$ statistics while fixing $\skips$. Therefore, the $q,t$-symmetry of $(m,n)$-rational $q, t$-Catalan polynomials for $m=3$ is proved.

\section{Notation and Background}
\label{sec:first}

Suppose $n$ is a non-negative integer that is not divisible by 3.  Let $L_{3,n}$ be the finite $\Z$-sublattice in the first quadrant with $n$ rows and 3 columns.  We use coordinates to describe the location of each cell in $L_{3,n}$ by first listing the column and then the row.  Rows are labeled from bottom to top and columns are labeled from left to right. So the lower-left corner of cell (1,1) rests on the origin.  We fill each $(a,b)$ cell in the diagram by it's rank: $R(a,b)=-an+3(b-1)$.  If we denote a cell by a variable, $x$, then we will denote the rank of $x$ by $\rk(x)$.

The {\it rank word} of $L_{3,n}$ is a list of the positive ranks of $L_{3,n}$ in increasing order with those ranks in the first column colored with subscript 1's and those in the second column colored with subscript 2's.  The rank word of the sublattice $L_{3,n}$ will be denoted by $\rk(L_{3,n})$.

\begin{example} The rank word $\rk(L_{3,5})$ is $\text{ \emph{\rankword{5}{}}}.$ \end{example}

One can think of the rank word as being composed of two interleaved sublists, one colored with 1's and the other with 2's.  Sublist 1 contains all of the positive integers that are less than $2n$ and equivalent to $2n$ modulo 3, and sublist 2 contains all of the positive integers that are less than $n$ and equivalent to $n$ modulo 3.  Thus there are $\lfloor n/3 \rfloor$ entries with subscript 2 and $\lfloor 2n/3 \rfloor$ entries with subscript 1.  This means that there are $n-1$ entries in $\rk(L_{3,n})$.

The {\it rank word of a $(3,n)$-Dyck path} is 
the rank word of the underlying diagram, $L_{3,n}$, with those entries above the Dyck path are marked.  In this paper we will mark entries by putting a box around them.

\begin{example} If  
\begin{center} $\Pi=$\catalannumber{0,0}{5}{3}{1,1,1,1,0,0,1,0}{1}{0.9},\end{center}
 then 
\[\rk(\Pi)=\text{\emph{\rankword{5}{2,7}}}.\]
\end{example}

If $\Pi$ is a $(3,n)$-Dyck path, define its {\it skip} as a collection of adjacent unboxed entries in the rank word with one or more boxed entries to the right and left.  In the above example, the 4 is a skip because the 2 and 7 are both in boxes.  The 1 is not a skip because there are no boxed entries to the left.  Define the {\it skip statistic} of $\Pi$ to be the total number of skips, denoted by $\skips(\Pi)$.

\begin{example} Let
\begin{center}
$\Pi_1=$
\catalannumber{0,0}{8}{3}{1,1,1,1,1,1,0,0,1,1,0}{1}{0.8}
and $\Pi_2=$
\catalannumber{0,0}{8}{3}{1,1,1,1,1,1,1,0,0,1,0}{1}{0.8}.
\end{center}
This gives
\[\rk(\Pi_1)=\text{\emph{\rankword{8}{2,5,10,13}}}\]
\[\rk(\Pi_2)=\text{\emph{\rankword{8}{5,13}}}.\]
Therefore $skip(\Pi_1)$ is 2 because the entries 4 and 7 are each skips.  However, $skip(\Pi_2)$ is only 1 because the 7 and 10 entries comprise a single skip.  It is useful to confirm that for the Dyck paths above, $\area(\Pi_1)=3$, $\area(\Pi_2)=5$, $\dinv(\Pi_1)=2$, and $\dinv(\Pi_2)=1$.
\end{example}

We will turn our discussion to the $\dinv$ statistic for the rest of this section.  Note that the ranks in $\rk(L_{3,n})$ only lie in the first and second columns of $L_{3,n}$.

\begin{proposition}
Any cell in the second column that is above a $(3,n)$-Dyck path satisfies the inequality \eqref{dinveq}, and thus contributes to the $\dinv$.
\end{proposition}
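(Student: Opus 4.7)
My approach exploits a rigidity of $(3,n)$-Dyck paths at their final east step. Parametrize the path $\Pi$ by the heights $h_1\le h_2\le h_3$ of its three east steps. Because the path must stay weakly above the line from $(0,0)$ to $(3,n)$ and also end at $(3,n)$, the Dyck condition at $x=3$ forces $h_3=n$. Thus the third east step is the top edge of the lattice, and $\lambda(\Pi)$ contains no cell in column~$3$.

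Once this is in hand, the rest is short. For any $x=(2,b)\in\lambda(\Pi)$, the only cell east of $x$ lies in column~$3$ and is therefore not in $\lambda(\Pi)$, so $\arm(x)=0$. With $\arm(x)=0$ and $m=3$, inequality \eqref{dinveq} collapses to the single requirement $\leg(x)<n/3$ (under the convention $1/0=+\infty$). I would then compute $\leg(x)=b-1-h_2\le n-1-h_2$ and invoke the Dyck lower bound $h_2\ge \lceil 2n/3\rceil>2n/3$ (strict because $n$ is not divisible by $3$) to conclude
\[\leg(x) \;<\; n-1-\tfrac{2n}{3} \;=\; \tfrac{n}{3}-1 \;<\; \tfrac{n}{3}.\]

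There is no serious obstacle here; the entire argument is driven by the single observation $h_3=n$, after which the arm contribution vanishes and the claim reduces to routine arithmetic with the Dyck bound on $h_2$. It is worth noting that this is a genuinely $m=3$ phenomenon: had there been a fourth column, cells east of $x$ could belong to $\lambda(\Pi)$ and the collapse to a pure leg bound would fail.
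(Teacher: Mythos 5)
Your proof is correct and takes essentially the same route as the paper's: both arguments reduce to showing $\arm(x)=0$ (no cell of $\lambda(\Pi)$ lies in column 3, which you obtain from $h_3=n$) and $\leg(x)<n/3$ (which you obtain from $h_2>2n/3$, equivalent to the paper's observation that at most $\lfloor n/3\rfloor$ column-2 cells lie above the path). The only difference is packaging: you use explicit east-step heights, while the paper counts the cells of positive rank.
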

\begin{proof}
As stated earlier, there are at most $\lfloor n/3 \rfloor$ cells above the path that lie in the second column.  Let $x$ be one such cell.  This means that $\arm(x)=0$ and $\leg(x)<n/3$.  Since $\arm(x)=0$ the left inequality in \eqref{dinveq} certainly holds, so we just need to show that $3\cdot\leg(x)<n$.  But this is exactly what we just claimed.
\end{proof}

\begin{proposition} \label{classifyprop}
Let $\Pi$ be a $(3,n)$-Dyck path.  Any cell $x$ in the first column and above $\Pi$ that does \emph{not} satisfy the inequality \eqref{dinveq} must satisfy one of the following:

\begin{equation} \label{cond1} \arm(x)=1 \text{ and } \leg(x)<\frac{n}{3}-1, \end{equation}
\begin{equation} \label{cond2} \arm(x)=0 \text{ and }\leg(x)>\frac{n}{3}. \end{equation}

\end{proposition}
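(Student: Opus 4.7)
The plan is to first bound the arm of a column-$1$ cell above $\Pi$, then perform a direct case analysis on the inequality \eqref{dinveq}, and finally rule out the single spurious sub-case that survives.

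For the reduction, observe that any $(3,n)$-Dyck path ends at $(3,n)$, and the point $(3,h)$ lies strictly below the diagonal $y=(n/3)x$ for every integer $h<n$. Thus the third east step of $\Pi$ must occur at height exactly $n$, which means no full cell of column $3$ lies above $\Pi$. For any $x$ in column $1$ above $\Pi$ we therefore have $\arm(x)\in\{0,1\}$, cutting the analysis down to two sub-cases.

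Next I would case-split on $\arm(x)$. If $\arm(x)=0$, the left half of \eqref{dinveq} is automatic and the right half rearranges to $\leg(x)<n/3$; since $\gcd(3,n)=1$, equality is impossible, so failure occurs precisely when $\leg(x)>n/3$, which is exactly \eqref{cond2}. If $\arm(x)=1$, the two halves of \eqref{dinveq} rearrange to $\leg(x)>n/3-1$ and $\leg(x)<2n/3$, respectively, so failure occurs either when $\leg(x)<n/3-1$ (this is \eqref{cond1}) or when $\leg(x)>2n/3$.

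The main obstacle is ruling out this last sub-case, $\arm(x)=1$ with $\leg(x)>2n/3$. Writing $x=(1,r)$ and letting $h_1,h_2$ denote the heights of the first two east steps of $\Pi$, the identity $\leg(x)=r-1-h_1$ turns $\leg(x)>2n/3$ into $r>h_1+1+2n/3$. The Dyck condition gives $h_1\geq n/3$, which is strict since $n/3\notin\Z$, so $r>n/3+1+2n/3=n+1$, contradicting $r\leq n$. This eliminates the spurious sub-case, so failure of \eqref{dinveq} for a column-$1$ cell above $\Pi$ is always captured by \eqref{cond1} or \eqref{cond2}. The only subtlety is keeping the strict-versus-weak inequalities straight, which is precisely where coprimality of $3$ and $n$ is used.
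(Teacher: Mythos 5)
Your proof is correct and takes essentially the same route as the paper: a case split on $\arm(x)\in\{0,1\}$, observing that the left half of the inequality is automatic when $\arm(x)=0$ and the right half is automatic when $\arm(x)=1$, with coprimality turning weak failures into the strict bounds of the two conditions. The only difference is presentational: you justify $\arm(x)\leq 1$ explicitly and rule out the sub-case $\leg(x)>2n/3$ via the height of the first east step, whereas the paper invokes the same fact through its earlier count of at most $\lfloor 2n/3\rfloor$ column-one cells above the path.
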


\begin{proof}
Let $x$ be a cell in the first column above the path $\Pi$ that does not satisfy the inequality  \eqref{dinveq}. Then $\arm(x)=1$ or $\arm(x)=0$.  

Suppose $\arm(x)=1$.  Then the inequality \eqref{dinveq} becomes
\[ \frac{1}{\leg(x)+1} < \frac{3}{n} <\frac{2}{\leg(x)}. \]
From the right inequality, then we have $\leg(x)<\frac{2n}{3}$.  As we previously discussed, there are never more than $\lfloor 2n/3 \rfloor$ positive entries above the path in the first column.  So the right inequality is always satisfied.  Therefore, the left inequality does not hold, which is equivalent to $\leg(x)<\frac{n}{3}-1$, which is line~\eqref{cond1}.

Suppose $\arm(x)=0$.  Then the inequality \eqref{dinveq} becomes
\[ \frac{0}{\leg(x)+1} < \frac{3}{n} <\frac{1}{\leg(x)}, \]
so the left inequality is immediately satisfied.  Therefore, the right inequality does not hold, which is equivalent to $\leg(x)>\frac{n}{3}$, which is line~\eqref{cond2}.

\end{proof}

This proposition tells us that the only cells above the Dyck path that do not contribute to the $\dinv$ are those without a cell immediately to the right (above the path) with many cells below or those with a cell immediately to the right (above the path) with few cells below.  Thus we can have one case or the other, but not both.

Now consider:
\begin{proposition}
If $\Pi$ is a $(3,n)$-Dyck path then 
\[ \area(\Pi)+\skips(\Pi)+\dinv(\Pi)=|rk(L_{3,n})|=n-1.\]
\end{proposition}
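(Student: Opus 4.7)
The plan is to count entries of $\rk(L_{3,n})$ in two different ways, matching $n-1$ with $\area(\Pi)+\skips(\Pi)+\dinv(\Pi)$. Since the rank word has $n-1$ entries, each either boxed (a cell above $\Pi$) or unboxed (a cell below $\Pi$ but above the diagonal), and since $\area(\Pi)$ is by definition the number of unboxed entries,
\[\area(\Pi)+\#\{\text{boxed entries of }\rk(\Pi)\}=n-1.\]

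Next, apply the two preceding propositions. The first says every boxed column-$2$ entry contributes to $\dinv(\Pi)$; Proposition~\ref{classifyprop} says every boxed column-$1$ entry either contributes to $\dinv(\Pi)$ or satisfies one of \eqref{cond1} and \eqref{cond2}. Letting $N(\Pi)$ be the number of boxed column-$1$ entries falling into case \eqref{cond1} or \eqref{cond2}, we get $\#\{\text{boxed entries}\}=\dinv(\Pi)+N(\Pi)$, and hence $\area(\Pi)+\dinv(\Pi)+N(\Pi)=n-1$. The proposition therefore reduces to the identity $N(\Pi)=\skips(\Pi)$.

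For this reduced identity, observe that $\skips(\Pi)=\max(0,k-1)$, where $k$ is the number of maximal runs of consecutive boxed entries (B-blocks) in the rank word: each U-block flanked on both sides by B-blocks contributes exactly one skip. I would prove $k=N(\Pi)+1$ (when $k\ge 1$) by describing the rank word explicitly using the column exit heights $h_1\le h_2$ of $\Pi$. In rank order, the column-$1$ entries have their first $h_1-\lceil n/3\rceil$ entries unboxed and the remaining $n-h_1$ boxed, with an analogous description for column $2$; moreover, column-$1$ and column-$2$ entries alternate for ranks at most $n-3$, and only column-$1$ entries occur for ranks greater than $n-3$.

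The main obstacle is the resulting case analysis, which splits on the sign of $h_2-h_1-\lfloor n/3\rfloor$. This same quantity determines whether case \eqref{cond1}, case \eqref{cond2}, or neither occurs for $\Pi$, which allows $N(\Pi)$ to be computed explicitly in each case. The number $k$ is then tallied as the count of positions in the rank word at which a boxed entry immediately follows an unboxed entry (or opens the rank word), and one checks directly that it matches $N(\Pi)+1$. The delicate points are the transition between the small-rank and large-rank regions of the rank word, and the mild differences between $n\equiv 1$ and $n\equiv 2\pmod 3$; once handled in each case, the identity $k-1=N(\Pi)$ (with both equal to zero when $k=0$) gives $\skips(\Pi)=N(\Pi)$ and completes the proof.
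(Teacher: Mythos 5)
Your reduction is correct and coincides with the paper's: since the rank word has $n-1$ entries, $\area(\Pi)$ counts the unboxed ones and, by the two preceding propositions, the boxed ones split into $\dinv(\Pi)$ plus the number $N(\Pi)$ of column-1 cells in cases \eqref{cond1} or \eqref{cond2}, so everything hinges on $N(\Pi)=\skips(\Pi)$. Where you diverge is in how that identity is proved. The paper argues locally: to each non-contributing cell $x$ it attaches a nearby cell (the cell $\lfloor n/3\rfloor$ below $x$ in case \eqref{cond1}, the cell immediately to its right in case \eqref{cond2}), shows its rank is sandwiched between ranks of cells above the path and hence lies in a skip, and shows distinct cells land in distinct skips. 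You instead compute both sides globally: $\skips(\Pi)=\max(0,k-1)$ with $k$ the number of maximal boxed runs, and then determine $k$ and $N(\Pi)$ explicitly from the exit heights $h_1\le h_2$ using the correct structural description of the rank word (alternating colors up to rank $n-3$, a run of $\lceil n/3\rceil$ column-1 entries above). This is closer in spirit to the case analysis the paper performs later for its rank-word construction algorithm than to its proof of this proposition. Each route has a payoff: the paper's injection avoids parametrizing the path, though as written it only exhibits a map from non-contributing cells into distinct skips (one inequality), whereas your computation yields both quantities exactly, so equality is automatic. The cost is the bookkeeping you defer: note that the split you name (the sign of $h_2-h_1-\lfloor n/3\rfloor$) is not quite sufficient by itself, since the count of boxed runs also depends on whether the boxed column-1 entries spill out of the top run, i.e.\ on $h_1$ versus $\lfloor 2n/3\rfloor$ (equivalently $n-h_1$ versus $\lceil n/3\rceil$); this is exactly the ``transition between the small-rank and large-rank regions'' you flag, and once that extra sub-split is made the identity $k-1=N(\Pi)$ does check out in every regime, so your plan closes.
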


\begin{proof}
It is sufficient to show that for every cell above the path that does not contribute to the $\dinv$ of $\Pi$ there is a unique corresponding skip in the rank word of $\Pi$.  Let $x$ be a such cell that does not satisfy the inequality \eqref{dinveq}.  Then by Proposition \ref{classifyprop}, $x$ satisfies one of two cases:

\emph{Case 1: $\arm(x)=1$ and $\leg(x)<\frac{n}{3}-1$}.  In this case it is certainly true that $\leg(x)<\lfloor n/3 \rfloor$.  Let $\underbar{\emph{x}}$ be the cell that is directly $\lfloor n/3 \rfloor$ cells below $x$.  We know that $\underbar{\emph{x}}$ must lie below the path.  We will show that for each $x_i$ that does not contribute to the $\dinv$, the rank of $\underbar{\emph{x}}_i$ will lie in a distinct skip.

Suppose $x$ is a cell with $\arm(x)=1$ that does not contribute to the $\dinv$. From $\arm(x)=1$, $\rk(x)>n$. So $\rk(\underbar{\emph{x}})=\rk(x)-3\cdot\lfloor n/3 \rfloor >0$. Let $\hat{x}$ be the cell immediately to the right of $x$. So $\rk(\hat{x})=\rk(x)-n$. From $n>3\cdot\lfloor n/3\rfloor$, $$\rk(\hat{x})<\rk(\underbar{\emph{x}})<\rk(x).$$ Since $x$ and $\hat{x}$ are both above the path, $\rk(\underbar{\emph{x}})$ must be in a skip.

Suppose that $x$ and $y$ are two cells with  $\arm(x)=\arm(y)=1$  that do not contribute to the $\dinv$. Assume that $\rk(x)>\rk(y)$. So $\rk(x)-3\geq\rk(y)$ and $\rk(y)>\rk(\underbar{\emph{x}})$. From $3\cdot\lfloor n/3\rfloor +3>n$, $$\rk(\underbar{\emph{y}})=\rk(y)-3\cdot\lfloor n/3 \rfloor\leq\rk(x)-3\cdot\lfloor n/3 \rfloor-3<\rk(x)-n=\rk(\hat{x}).$$ So
$$\rk(\hat{y})<\rk(\underbar{\emph{y}})<\rk(\hat{x})<\rk(\underbar{\emph{x}})<\rk(x),$$
i.e., $\rk(\underbar{\emph{x}})$ and $\rk(\underbar{\emph{y}})$ must be in distinct skips.

\emph{Case 2: $\arm(x)=0$ and $\leg(x)>\frac{n}{3}$}.  From $arm(x)=0$, $\hat{x}$ must lie below the path.  We will show that for each $x_i$ that does not contribute to the $\dinv$, the rank of $\hat{x_i}$ will lie in a distinct skip.

Let $\bar{x}$ be the cell $\lceil n/3 \rceil$ below cell $x$. Since $\leg(x)>\frac{n}{3}$ then cell $\bar{x}$ is above the path. From $3\cdot\lceil n/3 \rceil>n$, $\rk(\bar{x})=\rk(x)-3\cdot\lceil n/3 \rceil <\rk(x)-n=\rk(\hat{x})$. So we have 
$$\rk(\bar{x})<\rk(\hat{x})<\rk(x).$$ 
Since $x$ and $\bar{x}$ are both above the path, $\rk(\hat{x})$ must be in a skip.

Suppose that $x$ and $y$ are two cells with  $\arm(x)=\arm(y)=0$  that do not contribute to the $\dinv$.
Assume that $\rk(x)>\rk(y)$. So $\rk(x)-3\geq\rk(y)$ and $\rk(y)>\rk(\bar{x})$. From $3\cdot\lceil n/3 \rceil>n+3$, $$\rk(\hat{y})=\rk(y)-n\leq\rk(x)-n-3<\rk(x)-3\cdot\lceil n/3 \rceil  =\rk(\bar{x}).$$So
$$\rk(\bar{y})<\rk(\hat{y})<\rk(\bar{x})<\rk(\hat{x})<\rk(x),$$
i.e., $\rk(\hat{x})$ and $\rk(\hat{y})$ must be in distinct skips.
\end{proof}

\section{Uniqueness of rank words}
In this section we will show that every $(3,n)$-Dyck path is uniquely determined by the $\dinv$, $\area$, and $\skips$ statistics.  That is if $\Pi_1$ and $\Pi_2$ are Dyck paths such that
\[ \area(\Pi_1)=\area(\Pi_2), \]
\[ \skips(\Pi_1)=\skips(\Pi_2), \]
\[ \dinv(\Pi_1)=\dinv(\Pi_2), \]
then $\Pi_1=\Pi_2$.  We know that $\area(\Pi_1)+\skips(\Pi_1)+\dinv(\Pi_1)=|\rk(L_{3,n})|=n-1$.  Therefore it is immediate that both paths must be inscribed in the same lattice.  So let us consider an algorithm that generates a rank word, given a Dyck path (later we will prove that it is the rank word associated to the Dyck path):

\vspace{3mm}
\noindent\textbf{Path rank word construction algorithm} \\
\textbf{Data:} Three non-negative integers, $a, s,$ and $d$ such that $a=\area(\Pi), s=\skips(\Pi),$ and $d=\dinv(\Pi)$\\ \hbox{\hskip 9mm} for some $(3,n)$-Dyck path. \\
\textbf{Result:} The rank word of $\Pi$, $\rk(\Pi)$.\\
Let $n=a+s+d+1$ and set $r=\rk(L_{3,n})$, the rank word of the diagram.\\
Put boxes around the rightmost $d$ entries of $r$. \\
\textbf{For} i = 1 to s \\
\hbox{\hskip 5mm} Moving left, skip all successive entries of the same color.\\
\hbox{\hskip 5mm} Box the rightmost entry that hasn't been skipped or boxed.\\
\textbf{end}

\begin{example} Consider the $(3,8)$-Dyck paths:

\begin{center}
$\Pi_1=$
\catalannumber{0,0}{8}{3}{1,1,1,1,1,1,0,0,1,1,0}{1}{0.8}
and $\Pi_2=$
\catalannumber{0,0}{8}{3}{1,1,1,1,1,1,1,0,0,1,0}{1}{0.8}.
\end{center}
We can compute that $\dinv(\Pi_1)=2$ and $\skips(\Pi_1)=2$.  If we follow the algorithm we have:\\
\begin{tabular}{ccp{6.6cm}}
Initial: & \emph{\rankword{8}{10,13}} & because $d=2$. \\
$i=1$: & \emph{\rankword{8}{5,10,13}} & because we skipped the 7 which has color 1 and boxed the rightmost entry that hasn't been skipped or boxed, the 5. \\
$i=2$: & \emph{\rankword{8}{2,5,10,13}} & because we skip the 4, and boxed the rightmost entry that hasn't been skipped or boxed, the 2.
\end{tabular}\\
Since $s=2$ we only perform two iterations of the loop, so we are done. 

Similarly for $\Pi_2$ we have:\\
\begin{tabular}{ccp{6.9cm}}
Initial: & \emph{\rankword{8}{13}} & because $d=1$. \\
$i=1$: & \emph{\rankword{8}{5,13}} & we skip the 10 and the 7 because they are adjacent and have the same color.  Then we box the rightmost entry which hasn't been boxed or skipped, which is the 5.
\end{tabular}\\
We have finished the algorithm at this point because $s=1$, so we only performed one iteration of the loop.
\end{example}

This leads us to the following proposition:
\begin{proposition} If $\Pi$ is a $(3,n)$-Dyck path and we use the path rank word construction algorithm to generate a word $\omega(\Pi)$, then $\omega(\Pi)=\rk(\Pi)$.
\end{proposition}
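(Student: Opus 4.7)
The plan is to show that $\omega(\Pi)$ and $\rk(\Pi)$ have the same boxed positions. Since both are built on the same underlying sequence $\rk(L_{3,n})$ (with $n = a+s+d+1$), only the positions of the boxes need matching. The boxes of $\rk(\Pi)$ are the ranks of the cells of $L_{3,n}$ above $\Pi$, which form the top $p_1$ entries of column 1 together with the top $p_2$ entries of column 2, where $p_1 = n-h$ and $p_2 = n-h'$ for $\Pi$'s east-step heights $h \leq h'$, and $p_1 + p_2 = d+s$ by the preceding proposition.

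First I would record the color structure of $\rk(L_{3,n})$: the column-1 ranks and column-2 ranks lie in two distinct nonzero residue classes modulo $3$, and scanning from right to left one finds a block of exactly $\lfloor n/3 \rfloor + 1$ consecutive column-1 entries at the top, followed by strict color alternation. This implies that the rightmost $d$ entries of $\rk(L_{3,n})$ always decompose as the top $Q_1$ column-1 entries and the top $Q_2$ column-2 entries for explicit counts $Q_1 + Q_2 = d$, which supplies the base case of an inductive invariant.

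I would then prove, by induction on the iteration index $i$, the invariant that after iteration $i$ the boxed set equals the top $q_1^{(i)}$ column-1 entries together with the top $q_2^{(i)}$ column-2 entries, with $q_1^{(i)} + q_2^{(i)} = d+i$. For the inductive step one checks that the touched region (boxes $\cup$ skips) is always a contiguous block at the right, so the rightmost un-touched entry is well-defined; the alternating color pattern below the top block, combined with the algorithm's leftward skip of same-colored entries, guarantees that the new box lands on the topmost un-boxed entry of the opposite color. A crucial sub-claim is that the skipped color is the same in every iteration, which follows from the observation that in the alternating region the color at the rightmost un-touched position is preserved two positions down, ruling out previously-skipped opposite-colored entries sitting above the new box.

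Finally, identifying $(q_1^{(s)}, q_2^{(s)})$ with $(p_1, p_2)$ follows by matching each iteration's "skip color $C$, box color $C'$" step with a non-dinv-contributing cell above $\Pi$ of the corresponding Case (1) or Case (2) in the proof of the previous proposition. The main obstacle is the sub-claim about constant skip-color across iterations, which requires the detailed alternating-color analysis of $\rk(L_{3,n})$ below the top block, together with careful handling of the transition where the initial boxing straddles the top-block/alternating-region boundary.
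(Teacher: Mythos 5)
Your plan is correct and follows essentially the same route as the paper: both arguments rest on the fact that $\rk(L_{3,n})$ is a top block of $\lceil n/3\rceil$ color-1 entries followed by strict color alternation, show that the algorithm's boxed set always consists of the rightmost entries of each color, and then match the per-color counts with the cells above $\Pi$ via the classification of non-$\dinv$-contributing cells in Proposition~\ref{classifyprop}. The only real difference is organizational: you package the algorithm trace as an induction with a constant-skip-color invariant, while the paper carries out the same count directly in three cases on the numbers $k$ and $\ell$ of column-1 and column-2 cells above the path.
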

\begin{proof}
To construct a $(3,n)$-Dyck path it is equivalent to choose a number of cells in the first column as well as a number of cells in the second column (subject to constraints)  to be above the path.  This is essentially choosing a number of entries colored 1 as well as a number of entries colored 2 in the $\rk(L_{3,n})$ to box .  Moreover, all of the boxed entries of a specific color must be the rightmost entries.  For instance, in the previous example, when we constructed the rank word of $\Pi_1$ we boxed the two rightmost entries colored with 1 and the two rightmost entries colored with 2.  Suppose that there are $k$ cells above the path in the first column and $\ell$ cells above the path in the second column.

\emph{Case 1:  Suppose that $k<n/3$.}  We will show that $k=\dinv(\Pi)$ and $\ell=\skips(\Pi)$.  From Proposition \ref{classifyprop}, each cell in column one that has $\arm$ equal to zero will contribute to the $\dinv$ of $\Pi$ because the $\leg$ is certainly less than $n/3$.  Every cell in column one that has $\arm$ equal to one does not contribute to the $\dinv$.  

Suppose $\ell>0$. Consider the upper left cell, call it $x$. We have $\arm(x)=1$ and  $\leg(x)=k-1 <\frac{n}{3}-1,$ i.e., $x$ does not contribute to $\dinv$. The other cells in column one with arm one have shorter legs. They will not contribute to $\dinv$ either.

In $\rk(L_{3,n})$ the rightmost $\dinv(\Pi)=k$ entries will be boxed, and they are all color 1.  Then the remaining rightmost 1-colored entries will be skipped (due to skipping) and the rightmost $\skips(\Pi)=\ell$ 2-colored entries will be boxed.  This concludes the proof of this case.

\emph{Case 2:  Suppose that $k>n/3$ and $k-\ell >\frac{n}{3}+1$.}  From Proposition \ref{classifyprop},  $\skips(\Pi)$ is equal to the number of cells with $\arm$ zero and whose $\leg$ is greater than $n/3$.  There are $k-\ell-\lceil n/3 \rceil$ such cells so $\skips(\Pi)=k-\ell-\lceil n/3\rceil$ and $\dinv(\Pi)=2\ell+\lceil n/3 \rceil$.

In $\rk(L_{3,n})$ there are $\lceil n/3\rceil$ entries with color 1 on the right and then the entries alternate in color, beginning with 2.  Thus we will box $\lceil n/3 \rceil$ color 1 entries, then $\ell$ entries with color 2 and $\ell$ entries with color 1.  Since the last entry we boxed was 1, we will begin skipping color 2 entries and box $k-\ell-\lceil n/3\rceil$ color 1 entries.  Therefore the rightmost $\ell$ color 2 entries are boxed and the rightmost $\lceil n/3\rceil+\ell+\left(k-\ell-\lceil n/3\rceil\right)=k$ color 1 entries are boxed.  This completes the proof for this case.

\emph{Case 3:  Suppose that $k>n/3$ and $k-\ell<\frac{n}{3}+1$.}  From Proposition \ref{classifyprop},  $\skips(\Pi)$ is equal to the number of cells with $\arm$ equal to one and whose $\leg$ is less than $\frac{n}{3}-1$.  There are $\ell-k+\lfloor n/3\rfloor$ of these
.  Thus $\skips(\Pi)=\ell-k+\lfloor n/3\rfloor$ and $\dinv(\Pi)=2k-\lfloor n/3\rfloor$.

In $\rk(L_{3,n})$ there are $\lceil n/3\rceil$ color 1 entries on the right and then the entries alternate in color, beginning with 2.  Thus we will box $\lceil n/3\rceil$ color 1 entries.  Then we will box $2k-\lfloor n/3\rfloor - \lceil n/3\rceil=2k-2\cdot\lfloor n/3\rfloor -1$ entries: $k-\lfloor n/3 \rfloor$ color 2 entires and $k-\lfloor n/3 \rfloor -1$ color 1 ones. Then we will box $\ell-k+\lfloor n/3\rfloor$ color 2 entries at the same time skip $\ell-k+\lfloor n/3\rfloor$ color 1 entries. So we have boxed $\lceil n/3 \rceil + k-\lfloor n/3\rfloor -1=k$ entries with color 1 and $k-\lfloor n/3\rfloor + \ell-k+\lfloor n/3 \rfloor=\ell$ entries with color 2.  This completes the proof.
\end{proof}

\begin{remark} Any rank word with the rightmost $k$ color 1 entries boxed, the rightmost $\ell$ color 2 entries boxed, and $k\geq \ell$ corresponds uniquely to a $(3, n)$-Dyck path.  
\end{remark}

In order to satisfy these conditions, the following inequalities must hold for any $(3, n)$-Dyck path $\Pi$:
\begin{enumerate}
\item $0\leq\skips(\Pi)< n/3$,
\item $\skips(\Pi)\leq \dinv(\Pi) \leq n-1-2\cdot\skips(\Pi)$,
\item $\skips(\Pi)\leq \area(\Pi)\leq n-1-2\cdot\skips(\Pi)$.
\end{enumerate}

The first inequality is clear because either the rank word skips only color 2 entries (of which there are $\lfloor n/3\rfloor$) or skips only color 1 entries.  There are $\lfloor2n/3\rfloor$ color 1 entries, but the first $\lceil n/3 \rceil$ entries only contribute at most one skip. We know $\lfloor2n/3\rfloor-\lceil n/3 \rceil \leq\lfloor n/3\rfloor$. When $\lfloor2n/3\rfloor-\lceil n/3 \rceil <\lfloor n/3\rfloor$, the first entry of the rank work has color 2. So, totally, there are at most $\lfloor n/3\rfloor$ entries with color 1 that contribute skips.   When $\lfloor2n/3\rfloor-\lceil n/3 \rceil =\lfloor n/3\rfloor$, the first entry of the rank work is color 1, which can not be a skip. Again, totally, there are at most $\lfloor n/3\rfloor$ entries with color 1 that contribute skips. 

For the second inequality, if $\dinv(\Pi)\geq\lceil n/3 \rceil$, from $\skips(\Pi)< n/3$ we have $\skips(\Pi)\leq \dinv(\Pi)$.  If $\dinv(\Pi)<\lceil n/3 \rceil$, then only color 1 entries are skipped. The number of boxed color 1 entries is $\dinv(\Pi)$ and the number of boxed color 2 entries is $\skips(\Pi)$. Recall that there must be at least as many color 1 ranks boxed as color 2 ranks boxed in the rank word of a path. So $\skips(\Pi)\leq \dinv(\Pi)$.

The right part of the second inequality equivalent to $\skips(\Pi)\leq\area(\Pi)$ (recall that $\area(\Pi)+\skips(\Pi)+\dinv(\Pi)=n-1$).  It follows from the fact that the area of a path is the number of unboxed entries in the rank word and a skip must always skip at least one entry.  From this argument one can also see that the second and third inequalities are actually equivalent.

The definition of $\omega$, above, does not actually require a path.  If we have three non-negative integers that satisfy the above inequalities we can generate a rank word, call it $\omega(\area,\skips,\dinv)$.

\begin{corollary} \label{abovecor} Given three non-negative integers $a,s,d$ that satisfy
\begin{enumerate}
\item $s\leq a$,
\item $s\leq d$,
\item $a+s+d+1$ is not divisible by $3$,
\end{enumerate}
then there is a unique $(3,a+s+d+1)$-Dyck path $\Pi$ with $a=\area(\Pi), s=\skips(\Pi),$ and $d=\dinv(\Pi)$.
\end{corollary}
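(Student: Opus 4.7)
The plan is to establish the corollary in two parts: existence of a $(3,n)$-Dyck path with the prescribed triple of statistics, and uniqueness of that path. Set $n = a+s+d+1$. The hypotheses $s \leq a$ and $s \leq d$ give $3s \leq a+s+d = n-1 < n$, so $s < n/3$; combined with $a = n-1-s-d \leq n-1-2s$ (from $s \leq d$) and the analogous bound on $d$, the triple satisfies the three inequalities recorded between the preceding remark and this corollary. Thus the algorithm's input is admissible.

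For existence I would feed $(a,s,d)$ into the path rank word construction algorithm to produce a word $\omega(a,s,d)$ on $\rk(L_{3,n})$. The key step is to verify that the output word has the shape required by the remark: for some integers $k \geq \ell$, precisely the rightmost $k$ color-$1$ entries and the rightmost $\ell$ color-$2$ entries are boxed. This requires a case analysis mirroring the three cases in the proof of the previous proposition, namely the regime where the entire $d$-block lands in the initial color-$1$ tail of length $\lceil n/3 \rceil$ (producing only color-$2$ skips), the regime where boxing extends into the alternating portion and skipping remains on a single color, and the intermediate transition where the loop's skip pattern crosses from one color to the other. In each regime one tracks a loop invariant comparing the boxed color-$1$ count to the boxed color-$2$ count, using $s < n/3$ to ensure the inequality $k \geq \ell$ survives.

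Once the shape is verified, the remark yields a unique $(3,n)$-Dyck path $\Pi$ whose rank word is $\omega(a,s,d)$. To finish existence I would show that this $\Pi$ actually realizes the given statistics, which is essentially the previous proposition read in reverse: the sum identity $\area + \skips + \dinv = n-1$ forces $\area(\Pi) = a$ once $\skips(\Pi) = s$ and $\dinv(\Pi) = d$ are known, and the latter two identities follow because the algorithm creates exactly $s$ skip blocks and because the classification in Proposition \ref{classifyprop} matches the boxed-versus-skipped pattern produced by the algorithm in each of the three regimes. Uniqueness is immediate from the previous proposition: if $\Pi_1$ and $\Pi_2$ share the triple $(a,s,d)$, then $\rk(\Pi_1) = \omega(a,s,d) = \rk(\Pi_2)$, and since the rank word determines the path, $\Pi_1 = \Pi_2$.

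The main obstacle is the bookkeeping in the intermediate regime, where the $d$ initial boxes and the subsequent $s$ skip-and-box iterations straddle the boundary between the all-color-$1$ tail of $\rk(L_{3,n})$ and its alternating portion. There one must argue that the skipping-and-boxing loop, which alternates colors at each iteration by construction, is synchronized precisely with the color alternation of the rank word itself, so that the output retains the ``rightmost $k$ of each color'' form. Once this is checked, both existence and uniqueness collapse into routine consequences of the preceding proposition and remark.
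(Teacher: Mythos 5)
Your overall route is the same as the paper's: run the path rank word construction algorithm on the triple $(a,s,d)$, check that the output is a word of the form ``rightmost $k$ color-1 and rightmost $\ell$ color-2 entries boxed with $k\geq\ell$,'' invoke the remark to get a path, read off the statistics, and get uniqueness from the fact that the algorithm depends only on the triple while the rank word determines the path. The genuine gap is the sentence ``Thus the algorithm's input is admissible.'' Admissibility here means that the word $\rk(L_{3,a+s+d+1})$, which has only $n-1=a+s+d$ entries, is long enough for the loop to perform all $s$ skip-and-box iterations without running off the left end, and establishing this is essentially the entire content of the paper's proof; it does not follow merely from listing the three inequalities, whose sufficiency is exactly what is at stake. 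Concretely, when $d\geq\lceil n/3\rceil$ each iteration consumes two entries, so one needs $d+2s\leq n-1$, i.e.\ $s\leq a$; when $d<\lceil n/3\rceil$ the first skip alone consumes the remaining $\lceil n/3\rceil-d$ color-1 entries of the tail, so one needs $2(s-1)+\lceil n/3\rceil+1\leq n-1$, equivalently $s-1+\lceil n/3\rceil\leq a+d$, which does follow from $s\leq a$, $s\leq d$ and $3\nmid n$ but only after a short counting argument (e.g.\ $2s\leq\lfloor 2n/3\rfloor$ since $3s\leq n-1$). Your plan never names this length check; the ``main obstacle'' you identify (color synchronization in the intermediate regime) is comparatively routine, since once the initial block leaves the color-1 tail the word strictly alternates and every iteration skips exactly one entry and boxes one entry of a fixed color.

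A smaller inaccuracy: in the regime where the $d$ initial boxes stay inside the color-1 tail, the algorithm boxes the rightmost $d$ color-1 entries and the rightmost $s$ color-2 entries, so the condition $k\geq\ell$ is precisely the hypothesis $s\leq d$, not a consequence of $s<n/3$; the bound $s<n/3$ (more precisely $s\leq\lceil n/3\rceil-1$) is what saves $k\geq\ell$ in the other regime. With the length check supplied and this bookkeeping corrected, the rest of your argument (statistics via the sum identity $\area+\skips+\dinv=n-1$ together with $\skips=s$ read off the constructed word, and uniqueness via $\rk(\Pi_1)=\omega(a,s,d)=\rk(\Pi_2)$) goes through and matches the paper's intent.
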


\begin{proof}  Set $n=a+s+d+1$.  We only need to check that $\omega(a,s,d)$ has enough entries to ensure that $s$ skips can be performed.  If $d\geq\lceil n/3 \rceil$ then each skip corresponds to two entries, one skipped and one not skipped.  So we need $2s+d\leq s+a+d$, which is true.  If $d<\lceil n/3 \rceil$ then the first skip and the rightmost $d$ entries together encompass $\lceil n/3 \rceil+1$ of the total entries.  The remaining skips then correspond to two entries. Again, we need $2(s-1)+\lceil n/3 \rceil+1\leq s+a+d$, which is equivalent to
\begin{equation}\label{eqstar} s-1+\lceil n/3 \rceil \leq a+d.\end{equation}
Recall that area is the unboxed entries.  Since there were $\lceil n/3\rceil -d$ skipped entries at the first skip and $s-1$ skipped entries in the middle,  $a\geq \lceil n/3\rceil -d + s-1$, which is exactly inequality \eqref{eqstar}.
\end{proof}

\begin{remark} Note that if $s\leq a,d$ and $n=a+s+d+1$ then immediately $s<n/3$.
\end{remark}

This leads us to the following theorem:
\begin{theorem}
\begin{align} C_{3,n}(q,t) & =\sum_{s=0}^{\lfloor n/3 \rfloor}\sum_{\substack{a,d\geq s\\a+s+d+1=n}}q^dt^a \\ &=\sum_{s=0}^{\lfloor n/3 \rfloor}\hspace{1mm}\sum_{a=s}^{n-2s-1}q^{n-a-s-1}t^a \end{align}
\end{theorem}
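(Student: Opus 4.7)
The plan is to observe that this theorem is essentially a reindexing corollary of Corollary~\ref{abovecor} together with the identity $\area(\Pi) + \skips(\Pi) + \dinv(\Pi) = n-1$. By the corollary, the map $\Pi \mapsto (\area(\Pi), \skips(\Pi), \dinv(\Pi))$ is a bijection between $(3,n)$-Dyck paths (with $n$ coprime to $3$, as assumed) and triples $(a,s,d)$ of non-negative integers satisfying $s \leq a$, $s \leq d$, and $a+s+d+1 = n$. So I would begin by rewriting the defining sum
\[
C_{3,n}(q,t) \;=\; \sum_{\Pi} q^{\dinv(\Pi)} t^{\area(\Pi)}
\]
as a sum over valid triples:
\[
C_{3,n}(q,t) \;=\; \sum_{\substack{a,s,d\geq 0 \\ s\leq a,\ s\leq d \\ a+s+d+1 = n}} q^{d}\, t^{a}.
\]

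Next I would peel off the $s$-index to get the outer sum. The remark after Corollary~\ref{abovecor} says that $s \leq a, d$ together with $a+s+d+1=n$ forces $s < n/3$, which, since $n$ is not divisible by $3$, is equivalent to $s \leq \lfloor n/3 \rfloor$. This yields the first displayed equality
\[
C_{3,n}(q,t) \;=\; \sum_{s=0}^{\lfloor n/3 \rfloor}\ \sum_{\substack{a,d\geq s \\ a+s+d+1=n}} q^{d}\, t^{a}.
\]

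For the second equality I would simply eliminate $d$ using $d = n-a-s-1$: the constraint $d \geq s$ becomes $a \leq n-2s-1$, while $a \geq s$ is unchanged, giving
\[
C_{3,n}(q,t) \;=\; \sum_{s=0}^{\lfloor n/3 \rfloor}\ \sum_{a=s}^{n-2s-1} q^{\,n-a-s-1}\, t^{a}.
\]
There is no genuine obstacle here since all the hard work lives in Corollary~\ref{abovecor}: the uniqueness direction was proved via the path rank word construction algorithm, and the existence direction verified that any admissible triple arises from a Dyck path. The theorem is just the generating-function consequence of that bijection.
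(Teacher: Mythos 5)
Your proposal is correct and matches the paper's intent exactly: the paper states this theorem without a separate proof, treating it as an immediate consequence of Corollary~\ref{abovecor}, the identity $\area(\Pi)+\skips(\Pi)+\dinv(\Pi)=n-1$, the inequalities $\skips\leq\area,\dinv$, and the remark that $s<n/3$, which is precisely the bijection-and-reindexing argument you spell out. Your substitution $d=n-a-s-1$ for the second equality is the evident intended step.
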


\begin{theorem}
There is a unique bijection on the set of $(3,n)$-Dyck paths that exchanges $\area$ and $\dinv$ while fixing $\skips$.
\end{theorem}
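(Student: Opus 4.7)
The plan is to read off the bijection directly from Corollary~\ref{abovecor}, which asserts that the map $\Pi \mapsto (\area(\Pi), \skips(\Pi), \dinv(\Pi))$ is a bijection between $(3,n)$-Dyck paths and triples $(a,s,d)$ of non-negative integers with $s \le a$, $s \le d$, and $a+s+d+1=n$. Since the defining constraints $s \le a$ and $s \le d$ are symmetric in $a$ and $d$, the involution $\sigma:(a,s,d)\mapsto(d,s,a)$ preserves the set of valid triples. So the candidate bijection is the composition $\phi := \Pi^{-1} \circ \sigma \circ \Pi$, where $\Pi$ denotes the statistic map. Concretely, given $\Pi$ with $(\area,\skips,\dinv) = (a,s,d)$, define $\phi(\Pi)$ to be the unique $(3,n)$-Dyck path (guaranteed by Corollary~\ref{abovecor}) whose triple of statistics is $(d,s,a)$.

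The verification that $\phi$ has the desired properties reduces to three quick observations. First, $\phi$ is well-defined because $(d,s,a)$ satisfies the hypotheses of Corollary~\ref{abovecor} whenever $(a,s,d)$ does. Second, by construction $\area(\phi(\Pi)) = \dinv(\Pi)$, $\dinv(\phi(\Pi)) = \area(\Pi)$, and $\skips(\phi(\Pi)) = \skips(\Pi)$. Third, $\phi$ is an involution (hence a bijection), since $\sigma$ is an involution on triples and the statistic map is a bijection; applying $\phi$ twice yields a path with statistics $(a,s,d)$, which by uniqueness of Corollary~\ref{abovecor} equals $\Pi$.

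For uniqueness of $\phi$, suppose $\psi$ is any bijection on $(3,n)$-Dyck paths that swaps $\area$ and $\dinv$ while fixing $\skips$. Then for every path $\Pi$, the triple of statistics of $\psi(\Pi)$ equals $(d,s,a) = \sigma(\area(\Pi), \skips(\Pi), \dinv(\Pi))$, which is precisely the triple of $\phi(\Pi)$. The uniqueness clause in Corollary~\ref{abovecor} (each valid triple corresponds to exactly one path) then forces $\psi(\Pi) = \phi(\Pi)$.

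There is essentially no hard step here; the entire statement is a formal consequence of Corollary~\ref{abovecor}, which provided the genuine content by showing that $(\area,\skips,\dinv)$ is a complete invariant of a $(3,n)$-Dyck path and that the image of this invariant is the explicitly described set of valid triples. The only thing to double-check is that swapping $a$ and $d$ keeps one inside that image, which is immediate from the symmetry of the constraints $s \le a$ and $s \le d$.
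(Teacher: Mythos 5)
Your proposal is correct and follows essentially the same route as the paper: the paper's proof likewise deduces the bijection and its uniqueness directly from the fact that $(\area,\skips,\dinv)$ uniquely determines a path and that the symmetric constraints make $(d,s,a)$ a valid triple whenever $(a,s,d)$ is. You simply spell out the formal verification in more detail than the paper does.
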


\begin{proof}  This follows from the fact that the $\area, \skips$, and $\dinv$ statistics uniquely define a path and for every $(a,s,d)$ triple corresponding to a path, $(d,s,a)$ also corresponds to a path.
\end{proof}

\begin{corollary}
The $(3,n)$-rational $q,t$-Catalan polynomials are symmetric in $q$ and $t$, i.e., 
\begin{equation} C_{3,n}(q,t)=C_{3,n}(t,q). \end{equation}
\end{corollary}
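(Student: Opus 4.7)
The plan is to derive the corollary as a one-line consequence of the immediately preceding theorem. By definition, $C_{3,n}(q,t) = \sum_{\Pi} q^{\dinv(\Pi)} t^{\area(\Pi)}$, where the sum ranges over all $(3,n)$-Dyck paths. Let $\phi$ denote the bijection on the set of $(3,n)$-Dyck paths guaranteed by the previous theorem, so that $\area(\phi(\Pi)) = \dinv(\Pi)$ and $\dinv(\phi(\Pi)) = \area(\Pi)$ for every $(3,n)$-Dyck path $\Pi$. First I would reindex the defining sum under the substitution $\Pi \mapsto \phi(\Pi)$; since $\phi$ is a bijection, reindexing preserves the sum and gives
\[ C_{3,n}(q,t) = \sum_{\Pi} q^{\dinv(\phi(\Pi))} t^{\area(\phi(\Pi))} = \sum_{\Pi} q^{\area(\Pi)} t^{\dinv(\Pi)} = C_{3,n}(t,q), \]
which is the desired identity.

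As a sanity check, the same conclusion can be read directly off the explicit formula established just before the bijection theorem: the inner index set $\{(a,d) : a,d \geq s,\ a+s+d+1 = n\}$ is visibly stable under the involution $(a,d) \mapsto (d,a)$, and under this involution the summand $q^d t^a$ becomes $q^a t^d$, so each inner sum (and therefore $C_{3,n}(q,t)$ itself) is invariant under $q \leftrightarrow t$. Either route works, but presenting the bijective version is more satisfying because it aligns with the overarching theme of the paper.

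There is essentially no obstacle at this stage: the substantive work was done in proving that the triple $(\area,\skips,\dinv)$ uniquely determines a $(3,n)$-Dyck path and in verifying, via Corollary~\ref{abovecor}, that every admissible triple $(a,s,d)$ with $s \leq a,d$ and $3 \nmid (a+s+d+1)$ is actually realized. Given those facts, the only thing to observe for the corollary is that the set of admissible triples is closed under swapping the first and third coordinates, which is immediate from the symmetric form of the constraints.
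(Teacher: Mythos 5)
Your proposal is correct and matches the paper's intent: the corollary is stated immediately after the bijection theorem precisely so that it follows by reindexing the defining sum under that bijection (equivalently, by the visible $(a,d)\mapsto(d,a)$ symmetry of the explicit formula), which is exactly what you do. No gaps.
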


This theorem leads us in two natural directions for further research.  First, generalize the $\skips$ statistic  to arbitrary $(m,n)$-Dyck paths, which would give insight into finding an involution that exchanges $\area$ and $\dinv$.  Second, completely describe Hikita's polynomial $H_{3,n}$ \eqref{hikitapoly} since we know the coefficient of $F_{(1^n)}$.

\acknowledgements
\label{sec:ack} We would like to thank Adriano Garsia and Jim Haglund for the motivation to work
on this problem. We wish to thank Emily Leven for inspiring conversations on this topic.  We are also grateful to the anonymous reviewers for providing useful comments and papers to compare to our work.

\bibliographystyle{abbrvnat}

\bibliography{catalanbib}
\label{sec:biblio}

\end{document}